\begin{document}
\bibliographystyle{alpha}
\newtheorem{theorem}{Theorem}
\newtheorem{lemma}{Lemma}
\newtheorem{definition}{Definition}
\newtheorem{proposition}{Proposition}
\newtheorem{remark}{Remark}
\newtheorem{property}{Property}
\newtheorem{corollary}{Corollary}
\newcounter{casenum}
\newenvironment{caseof}{\setcounter{casenum}{1}}{\vskip.5\baselineskip}
\newcommand{\case}[2]{\vskip.5\baselineskip\par\noindent {\bfseries Case \arabic{casenum}:} #1\\#2\addtocounter{casenum}{1}}

\theoremstyle{plain}		
				\newtheorem{thm}{Theorem}[section]
					\newtheorem*{convention}{Convention}
					\newtheorem*{acknowledgement}{Acknowledgement}

\title{Groups with decidable word problem that do not embed in groups with decidable conjugacy problem}
\author{Arman Darbinyan}
\date{}

\affil{\emph{}}

\maketitle

\begin{abstract}
	We show the existence of finitely presented torsion-free groups with decidable word problem that cannot be embedded in any finitely generated group with decidable conjugacy problem. This answers a well-known question of Collins from the early 1970's.
\end{abstract}

\section{Introduction}

In this article we construct torsion-free groups with decidable word problem that cannot be embedded in groups with decidable conjugacy problem, hence answering the following question first asked by Collins in the early 1970's.
 \begin{quote}
  Question 1. Can every torsion-free group with decidable word problem be embedded in a
group with decidable conjugacy problem?	
  \end{quote}
 Probably the first source where this question was posed in a written form is \cite{collins-problem-source-1}. This question also appears in the 1976 edition of The Kourovka Notebook as Problem 5.21, \cite{kourovka}.  Other sources with this question include \cite{collins-problem-source-2, olsh-sapir-survey, sapir-olsh-- conjugacy}.\\
 
Let $G=\langle X \rangle$ be a  group with a given finite generating set $X$. If we consider the elements of $X\cup X^{-1}$ as formal letters and denote by $(X\cup X^{-1})^*$ the set of finite words from the alphabet $X\cup X^{-1}$, then each finite word from $(X\cup X^{-1})^*$ in a natural way corresponds to an element in $G$,  and vice versa: for each element $g\in G$ there exists a word (in fact, countably many words) $w \in (X\cup X^{-1})^*$ that corresponds to $g$, in which case we write $w=_G 1$.  It is said that $G$  has decidable word problem if there exists an algorithm that for each $w\in (X\cup X^{-1})^*$ decides whether or not $w=_G 1$. It is said that $G$ has decidable conjugacy problems if there exists an algorithm that for each pair of words $u, v \in (X\cup X^{-1})^*$ decides whether or not $u$ and $v$ represent conjugate elements in $G$. Decidability of the conjugacy problem obviously implies the decidability of the word problem, nevertheless, in general, the inverse is not true \cite{miller-1, collins-wp-cp}. The word and conjugacy problems, along with the group isomorphism problem, are considered as the central decision problems in combinatorial and geometric group theory, and they were first studied by Max Dehn in 1911. See \cite{dehn}.

The connections between the word and conjugacy problems have been studied extensively by various authors. The first examples of finitely generated groups with decidable word problem but undecidable conjugacy problem were constructed by Miller III \cite{miller-1} and by Collins \cite{collins-wp-cp}. For other early results about connections between the word and conjugacy problems in groups see for example  \cite{collins-miller, gorjaga-kirk,  miller}.

 It was mentioned by Collins in \cite{kourovka} that, due to an example by Macintyre, there exists a finitely generated group with torsions and decidable word problem which cannot be embedded into a finitely generated group with decidable conjugacy problem.  However, the case for torsion-free groups remained  open until now. Indeed, one of the reasons why the torsion and torsion-free cases are different is based on the observation that conjugate elements in a group must have the same order, and since in a torsion-free group all non trivial elements have the same (infinite) order, for torsion-free groups one cannot make use of this observation to answer Question 1.

We resolve Question 1 with Theorem \ref{theorem-answer-to-collins} below.
\begin{theorem}
\label{theorem-answer-to-collins}
	There exists a finitely generated torsion-free group $\mathcal{G}$ with decidable word problem such that $\mathcal{G}$ cannot be embedded into a  group with decidable conjugacy problem. Moreover, $\mathcal{G}$ can be chosen to be a solvable group of derived length $4$ or a finitely presented group.
\end{theorem}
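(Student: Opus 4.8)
The plan is to prove the statement as a reduction: I would construct $\mathcal{G}$ so that a solution to the conjugacy problem in \emph{any} overgroup $H \ge \mathcal{G}$ yields a decision procedure for a problem about $\mathcal{G}$ that is intrinsically undecidable. The guiding principle is to locate data about $\mathcal{G}$ that is \emph{absolute}, i.e.\ whose truth value is unchanged when $\mathcal{G}$ is embedded into a larger group, and to encode an undecidable instance of it into conjugacy. I would build $\mathcal{G}$ in two stages: first a recursively presented torsion-free group $G_0$ with decidable word problem that carries the obstruction, and then upgrade to a finitely presented $\mathcal{G}$ by a Higman–Clapham–type embedding (in the spirit of the Olshanskii–Sapir machinery cited above) chosen to preserve torsion-freeness, decidability of the word problem, and the obstruction itself.

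The key observation driving the choice of encoding is that, although single infinite-order elements can always be made conjugate in some overgroup---so that the ``same order'' invariant used in Macintyre's torsion example is unavailable here---several kinds of data \emph{are} absolute under embeddings: the truth of an equation $w(\bar g)=1$, membership $w \in \langle k_1,\dots,k_m\rangle$ of a word $w$ in a \emph{fixed} finitely generated subgroup $K \le \mathcal{G}$, and the power relation $g \in \langle h\rangle$. Each is a positive-existential statement witnessed inside $\mathcal{G}$, so it holds in $\mathcal{G}$ if and only if it holds in every $H \ge \mathcal{G}$. This also dovetails with the Olshanskii–Sapir theorem: since the order problem is trivial in a torsion-free group, their result forces any counterexample to have unsolvable power problem, so I would aim the construction squarely at making an absolute membership/power problem undecidable \emph{and} conjugacy-detectable. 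Concretely, I would arrange a recursive family of words $\{a_n\}$ and a fixed marker $\beta$ so that $a_n \sim_{\mathcal{G}} \beta$ holds exactly when $n$ lies in some fixed undecidable set (e.g.\ one of a recursively inseparable pair of r.e.\ sets) encoded by membership in $K$.

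Then, assuming $\mathcal{G} \le H$ with $H$ having decidable conjugacy problem, the set $\{n : a_n \sim_H \beta\}$ is decidable. By construction $a_n \sim_{\mathcal{G}}\beta$ implies $a_n\sim_H\beta$, so this set contains the ``positive'' instances; the crux is the converse inclusion, that $a_n \sim_H \beta$ can occur only when the underlying absolute membership statement is already true in $\mathcal{G}$. \textbf{This is the main obstacle.} A conjugator realizing $a_n^h = \beta$ lives in the wild overgroup $H$, not in $\mathcal{G}$, so it carries no obvious membership witness; the whole difficulty is to hard-wire into the relations of $\mathcal{G}$ enough rigidity (via a carefully chosen amalgam / multiple-HNN or aspherical presentation of torsion-free groups with decidable word problem) that the mere \emph{existence} of a conjugator in any overgroup is equivalent to the existence, inside $\mathcal{G}$, of a genuine $K$-membership witness. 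In other words, I must engineer the gadget so that the a priori existential-over-$H$ statement ``$a_n$ is conjugate to $\beta$'' collapses to an absolute statement about $\mathcal{G}$: one direction is true by design, and the other must follow from the preservation of the relevant defining equations. Given such a gadget, the decidable set above separates the chosen recursively inseparable sets, a contradiction, which settles the case of $G_0$.

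Finally, I would transfer the conclusion from $G_0$ to a finitely presented $\mathcal{G}$. The plan is to apply an embedding theorem of Higman/Clapham type, refined as in the Olshanskii–Sapir circle of ideas, that embeds a recursively presented group with decidable word problem into a finitely presented one while keeping the word problem decidable and the group torsion-free; I would additionally verify that the marker $\beta$, the family $\{a_n\}$, and the absolute membership statement survive the embedding intact, so that the same reduction runs verbatim for $\mathcal{G}$. I expect the rigidity analysis of the conjugacy gadget---ensuring robustness against arbitrary overgroups---to be by far the most delicate part, with the finite-presentation step being technically heavy but conceptually standard.
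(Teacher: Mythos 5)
Your overall skeleton --- a recursively inseparable pair of r.e.\ sets, a two-stage construction (first a recursively presented torsion-free group with decidable word problem carrying the obstruction, then a Higman-type upgrade to a finitely presented $\mathcal{G}$ preserving torsion-freeness and the word problem), and a reduction in which decidable conjugacy in an overgroup would produce a recursive separating set --- is exactly the architecture of the paper. But the step you yourself flag as ``the main obstacle'' is not merely delicate: the gadget you propose for it provably cannot work. You want a recursive family $\{a_n\}$ and a fixed marker $\beta$ such that conjugacy $a_n \sim_H \beta$ in an \emph{arbitrary} overgroup $H \geq \mathcal{G}$ forces an absolute statement (a $K$-membership witness) that holds only for the positive instances of $n$. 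This is impossible: since $\mathcal{G}$ is torsion-free, $a_n$ and $\beta$ both have infinite order, so for \emph{every} $n$ the HNN-extension $H_n=\langle \mathcal{G},\, t \mid t^{-1}a_n t=\beta \rangle$ is a genuine overgroup of $\mathcal{G}$ (it satisfies all defining relations of $\mathcal{G}$ and contains it, by the standard embedding property of HNN-extensions) in which $a_n \sim \beta$. Hence any implication of the form ``$a_n\sim_H\beta$ in an overgroup $\Rightarrow$ the absolute statement holds in $\mathcal{G}$'' that is derived, as you propose, from ``preservation of the relevant defining equations'' would force the absolute statement to hold for \emph{all} $n$, destroying the encoding. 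No rigidity of the presentation of $\mathcal{G}$ (amalgams, multiple HNN, asphericity) can block conjugations imposed from outside; this is precisely why single-element conjugacy, like element order in the torsion-free setting, carries no exploitable invariant, and why Collins' question was hard in the first place.

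The paper escapes this trap with a different gadget: \emph{simultaneous} conjugacy of \emph{pairs} by a single conjugator, where the obstruction in the negative case is itself absolute. For each $n$ there are two pairs $(\dot a_{n,0},\dot a_{n,1})$ and $(\dot b_{n,0},\dot b_{n,1})$ subject to $\dot a_{n,1}=2^{i}\dot a_{n,0}$ and $\dot b_{n,1}=2^{i}\dot b_{n,0}$ when $n=n_i\in\mathcal{N}$, but $\dot a_{n,1}=3^{i}\dot a_{n,0}$ and $\dot b_{n,1}=2^{i}\dot b_{n,0}$ when $n=m_i\in\mathcal{M}$, together with stable letters $t_i$ conjugating $\dot a_{n_i,0}$ to $\dot b_{n_i,0}$ exactly for $n_i\in\mathcal{N}$. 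Now in any overgroup $\bar G$ of the embedded group: if $n\in\mathcal{N}$, then \emph{any} conjugator $g$ taking $\Phi(\dot a_{n,0})$ to $\Phi(\dot b_{n,0})$ automatically takes $\Phi(\dot a_{n,1})=\Phi(\dot a_{n,0})^{2^i}$ to $\Phi(\dot b_{n,0})^{2^i}=\Phi(\dot b_{n,1})$; while if $n\in\mathcal{M}$, no $g$ can conjugate both coordinates, since that would give $\Phi(\dot b_{n,0})^{3^i}=\Phi(\dot b_{n,0})^{2^i}$, i.e.\ a torsion relation on an element that has infinite order already inside the embedded group --- and infinite order \emph{is} preserved in every overgroup. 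The reduction then runs: test conjugacy of the first coordinates with the decidable conjugacy problem of $\bar G$, search for a conjugator $g_n$ when one exists, and use the word problem of $\bar G$ to test whether $g_n$ also conjugates the second coordinates; the resulting set $\mathcal{A}$ is recursive, contains $\mathcal{N}$, and misses $\mathcal{M}$ (the behavior for $n\notin\mathcal{N}\cup\mathcal{M}$ is irrelevant, which is exactly what recursive inseparability buys you). So your plan needs its central gadget replaced by such a simultaneous-conjugacy/incompatible-powers mechanism; with that substitution, the remainder of your outline goes through essentially as in the paper.
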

  
  In \cite{sapir-olsh-- conjugacy}, Olshanskii and Sapir showed that if in addition to the word problem the power problem is also solvable in a group, then the group embeds into a group with decidable conjugacy problem. Another evidence for the seemingly positive answer to the question of Collins was a well-known theorem of Osin stating that every torsion-free countable group embeds into a two-generated group with exactly two conjugacy classes \cite{osin annals}. In \cite{miasnikov-schupp}, Miasnikov and Schupp observed that every finitely generated group with recursively enumerable presentation and finite number of conjugacy classes has decidable conjugacy problem. It follows from Theorem \ref{theorem-answer-to-collins} and from the observation of Miasnikov and Schupp that Osin's theorem cannot be improved so that it preserves the property of having recursively enumerable presentation.
~\\

Two disjoint sets of natural numbers $S_1, S_2 \subset \mathbb{N}$ are called \textit{recursively inseparable} if there is no recursive set $T \subset \mathbb{N}$ such that $S_1 \subseteq T$ and $S_2 \subseteq \mathbb{N} \setminus T$. The set $T$ is called \textit{separating set}. Clearly, if two disjoint sets are recursively inseparable, then both of them are non-recursive. Indeed, if, say, $S_1$ is recursive, then as a recursive separating set  one could simply take $S_1$. Nevertheless, it is well-known that there exist recursively enumerable and recursively inseparable pairs of sets. See \cite{shoenfield-logic}.
One of the key ingredients in the proof of Theorem \ref{theorem-answer-to-collins} is the utilization of the recursively enumerable recursively inseparable sets. Up to our knowledge this is the second time after a result of Miller III \cite[Corollary 3.9]{miller} that this concept was utilized to produce a new result in a group theoretical setting. The method used for obtaining the main result of the current paper was later applied by the author to answer other open questions as well, \cite{darbinyan-orders}. 

\begin{acknowledgement}
 \emph{I would like  to thank Alexander Olshanskii for his thoughtful comments on this work.    }
\end{acknowledgement}




\section{Countable groups with decidable word problem}


Suppose that $G=\langle X \rangle$, where $X = \{ x_1, x_2, \ldots \}$. 
Then let us denote 
\begin{align*}
	WP_{X}(G)=\{w \in (X \cup X^{-1})^* \mid w=_G 1\}.
\end{align*}
We call the set $WP_X(G)$ the word problem of $G$ with respect to the enumerated generating set $X = \{ x_1, x_2, \ldots \}$. We say that $WP_X(G)$ is decidable if there exists an algorithm that for any input $(\pm n_1, \pm n_2, \ldots \pm  n_k)$, $k\in \mathbb{N}$, $n_i\in \mathbb{N}$, $i=1, \ldots k$, decides whether or not the word $x_{n_1}^{\pm 1}x_{n_2}^{\pm 1} \ldots x_{n_k}^{\pm 1}$ represents the trivial element of $G$. If $WP_X(G)$ is decidable, then we say that the word problem of $G$ is decidable with respect to the enumerated generating set $X$.

Let $i: \mathbb{N} \rightarrow \mathcal{I}$ be a bijection, where $\mathcal{I}$ is a recursive set. Then we say that the word problem in $G$ is decidable with respect to the enumeration $X=\{ x_{i(1)},  x_{i(2)}, \ldots \}$ (we denote $WP_{(X, i)}(G)$), if there exists an algorithm that for any input  $$\big(\pm i(n_1), \pm i(n_2), \ldots \pm  i(n_k)\big),$$
$k\in \mathbb{N}$, $n_i\in \mathbb{N}$, $i=1, \ldots k$,
decides whether or not the word  $x_{i(n_1)}^{\pm 1}x_{i(n_2)}^{\pm 1} \ldots x_{i(n_k)}^{\pm 1}$ represents the trivial element of $G$. In case $i: \mathbb{N} \rightarrow \mathcal{I}$ is computable, the decidability of $WP_{(X, i)}(G)$ is equivalent to the decidability of $WP_X(G)$. 
~\\

Traditionally, the word problem in groups is defined with respect to finite generating sets. The main advantage of the finite generating sets is that in this case the decidability of the word problem does not depend on a particular choice of generators and their enumeration, hence the decidability becomes a group intrinsic property rather than being dependent on the enumerated generating set. The groups with decidable word problems with respect to countable infinite enumerated sets were independently introduced in slightly different terms by Rabin \cite{rabin} and Mal'cev \cite{mal'cev} and are called in literature by the more common name of \emph{computable groups}. 

If a finitely generated group has decidable word problem with respect to a (equivalently, any) finite generating set, then it is said that the group has \emph{decidable word problem}.

 The following embedding theorem is from \cite{darbinyan}.
\begin{thm}[\cite{darbinyan}]
\label{th-embedding}
	Let $G = \langle X  \rangle$, $X=\{x_1, x_2, \ldots \}$, such that the word problem $WP_X(G)$ is decidable. Then there exists an embedding $\Phi_X: G \rightarrow H=\langle c, s \rangle$ of $G$ into a two generated group $H$ such that the following holds.
	\begin{enumerate}
		\item[(1)] The word problem is decidable  in $H$;
		\item[(2)] The map $n \mapsto \Phi_X(x_n)$ is computable (where $\Phi_X(x_n)$ is represented as a word from $\{c^{\pm 1}, s^{\pm 1}\}^*$);
		\item[(3)] If $G$ is torsion-free, then so is $H$;
		\item[(4)] If $G$ is a solvable group of derived length $l$, then $H$ is a solvable group of derived length $l+2$.
	\end{enumerate}
\end{thm}
\begin{remark}
    Parts (1) and (4) of Theorem \ref{th-embedding} are explicitly stated in Theorem 1 and Corollary 2 in \cite{darbinyan}. However, even though (2) and (3) are not stated explicitly, they follow from the fact that the embedding in \cite{darbinyan} embeds countable group $G = \langle X  \rangle$ (notations differ there) into a two-generated subgroup $H=\langle c, s \rangle$ of $G \wr \mathbb{Z} \wr \mathbb{Z}$ by $\Phi_X: x_n \mapsto [c, c^{s^{2^n-1}}]$ (see Page  5 in \cite{darbinyan}). The computability properties of the word problem in $H$ strongly depend on the generating set $X$ and its enumeration.
\end{remark}

\section{The construction}
In order to show the existence of $\mathcal{G}$ from Theorem \ref{theorem-answer-to-collins}, first, we will construct a  countable but not finitely generated group $\dot{G}$ with decidable word problem. Then $\mathcal{G}$ will be defined as a group in which $\dot G$ embeds in a certain way.\\

Let us fix two disjoint recursively enumerable and recursively inseparable sets $\mathcal{N}=\{n_1, n_2, \ldots \} \subset \mathbb{N}$ and $\mathcal{M}=\{m_1, m_2, \ldots \} \subset \mathbb{N}$ such that the maps $i \mapsto n_i$ and $i \mapsto m_i$ are computable.

For all $n \in \mathbb{N}$, define the group $A_n$ as a free abelian additive group of rank two with basis $\{a_{n, 0}, a_{n, 1}\}$, that is
$$A_n= \langle a_{n, 0} \rangle\bigoplus \langle a_{n, 1} \rangle,$$
and such that the groups $A_1, A_2, \ldots$ are disjoint.

For all $n \in \mathbb{N}$, define the groups $\dot{A}_n$ as follows:
\begin{equation*}
\dot{A}_n = \left\{
                      \begin{array}{ll}
                       A_{n}  & \mbox{if $n \notin (\mathcal{N} \cup \mathcal{M})$ ,}\\
                       A_n  / \ll a_{n, 1} = 2^{i} a_{n, 0} \gg& \mbox{if $n =n_i \in \mathcal{N}$ ,}\\
                       A_n  / \ll a_{n, 1} = 3^{i} a_{n, 0} \gg& \mbox{if $n =m_i \in \mathcal{M}$.}
                     \end{array}
                    \right.
\end{equation*}

For all $n \in \mathbb{N}$, define the group $B_n$ as a free  abelian additive group of rank $2$, that is
\begin{align*}
	B_n =  \langle b_{n, 0} \rangle \bigoplus \langle b_{n, 1} \rangle,
\end{align*}
such that $B_1$, $B_2$, \ldots ~are disjoint.

Now, for all $n\in \mathbb{N}$, define the groups $\dot{B}_n$ as follows:
 \begin{equation*}
\dot{B}_n = \left\{
                      \begin{array}{ll}
                       B_n & \mbox{if $n \notin (\mathcal{N} \cup \mathcal{M})$ ,}\\
                       B_{n} / \ll b_{n, 1} = 2^{i} b_{n, 0} \gg & \mbox{if $n=n_i \in \mathcal{N}$ or $n=m_i \in \mathcal{M}$.}
                     \end{array}
                    \right.
\end{equation*}
~\\

For all $n\in \mathbb{N}$ and $\epsilon\in \{0, 1\}$, let us denote the images of $a_{n, \epsilon}$ under the natural homomorphisms $A_n \rightarrow \dot{A}_n$ by $\dot{a}_{n, \epsilon}$. Analogously, the image of $b_{n, \epsilon}$ under the natural homomorphism $B_n \rightarrow \dot{B}_n$ we denote by $\dot{b}_{n, \epsilon}$. It follows from the definitions of $\dot{A}_n$ and $\dot{B}_n$ that, for all $n \in \mathbb{N}$, these groups are infinite and torsion free.

\begin{convention}
Below, whenever a group $G$ is given with a generating set $S$, depending on the context, we either consider $S\cup S^{-1}$ as a subset of $G$ or we consider it as a set of formal letters (i.e. alphabet) that can form finite words. For example, if $G$ is an additive abelian group and $a, b \in S$, then $ab^{-1}$ and $b^{-1}a$ are finite words over the alphabet $S\cup S^{-1}$ that represent the element $a-b \in G$. 
\end{convention}

\begin{lemma}
\label{lem-wp-An}
	
	There exists an algorithm such that for each input $n \in \mathbb{N}$ and $w  \in \{ \dot{a}_{n, 0}^{\pm 1},  \dot{a}_{n, 1}^{\pm 1} \}^*$, it decides whether or not $w$ represents the trivial element in the group $\dot A_n$. Analogous statement also holds for the groups $\dot B_n$ with respect to the generating sets $\{ \pm \dot{b}_{n, 0}, \pm \dot{b}_{n, 1} \}$.
\end{lemma}
\begin{proof}
Indeed, since $\dot{A}_n$ is abelian with generating set $\{\dot a_{n, 0}, \dot a_{n, 1} \}$, each word $w$ from $\big\{ \dot a_{n, 0}^{\pm 1}, \dot a_{n, 1}^{\pm 1} \big\}^*$ can be algorithmically transformed to a word of the form
	\begin{align*}
		w'=  \big(\dot a_{n, 0} \big)^{\lambda_0}  \big( \dot a_{n, 1} \big)^{\lambda_1}, ~\lambda_0, \lambda_1 \in \mathbb{Z}
	\end{align*}
that represents the same element of $\dot{A}_n$ as the initial word $w$.
	
	Now, assuming that $\lambda_0 \neq 0, \lambda_1 \neq 0$, in order for $w'$ to represent the trivial element in $\dot{A}_n$ it must be that $n \in \mathcal{N} \cup \mathcal{M}$, because if otherwise, then, by definition, the group $\dot{A}_n$ is free abelian of rank $2$ with basis $\{\dot a_{n, 0}, \dot a_{n, 1} \}$.
	
	In case $n \in \mathcal{N}$, by definition we have that $\dot a_{n, 1} = 2^{x} \dot a_{n, 0}$, where $x$ is the index of $n$ in $\mathcal{N}$, i.e. $n=n_x$.	Similarly, in case $n \in \mathcal{M}$, by definition we have that $\dot{a}_{n, 1} = {3^{x} }\dot a_{n, 0}$, where $x$ is the index of $n$ in $\mathcal{M}$, i.e. $n=m_x$.
	
	Now, if $\lambda_0 = 0$ and $\lambda_1 = 0$, then clearly $w'$ (hence also $w$) represents the trivial element in $\dot A_n$. Therefore,  without loss of generality let us assume that at least one of $\lambda_0$ and $\lambda_1$ is not $0$. Then,
	if we treat $x$ as an unknown variable,  depending on whether $n= n_x$ or $n=m_x$, the equality $w'=0$ would imply one of the following equations:
	\begin{align}
	\label{eq - 3.1}
		\lambda_0 + \lambda_1 2^{x} =0
	\end{align}
	or
	\begin{align}
	\label{eq - 3.2}
		\lambda_0 + \lambda_1 3^{x} =0,
	\end{align}
	respectively.
	
	This observation suggests that in case $\lambda_0 \neq 0 $ or $\lambda_1 \neq 0$, in order to verify whether or not $w'=0$ in $\dot{A}_n$, we can first try to find $x$ satisfying  \eqref{eq - 3.1} or  \eqref{eq - 3.2}, and in case such an $x$ does not exist, conclude that $w'$ (hence, also $w$) does not represent the trivial element in $\dot{A}_n$. Otherwise, if $x$ is the root of the equation \eqref{eq - 3.1}, we can check whether or not $n = n_x$ (since $\mathcal{N}$ is recursively enumerable, this checking can be done algorithmically). Similarly, if $x$ is the root of the equation \eqref{eq - 3.2}, we can check whether or not $n = m_x$.	If as a result of this checking we get $n=n_x$ (respectively, $n=m_x$), then the conclusion will be that $w'$ (hence, also $w$) represents the trivial element in $\dot{A}_n$, otherwise, if $n \neq n_x$ (respectively, $n \neq m_x$), then the conclusion will be that $w'$ (hence, also $w$) does not represent the trivial element in $\dot{A}_n$.

The proof of the same statement for the groups $\dot{B}_n$ is identical.

\end{proof}

Define 
\begin{equation*}
	\dot A= \bigoplus_{n=1}^{\infty} \dot{A}_n, ~ \dot B = \bigoplus_{n=1}^{\infty} \dot{B}_n,  ~\dot{G}_0 = \dot{A} \bigoplus \dot{B}\text{~and~} C=\bigoplus_{n=1}^{\infty} \langle c_n \rangle,
\end{equation*}
where $\langle c_1 \rangle, \langle c_2 \rangle, \ldots$ are pairwise disjoint infinite cyclic groups.
Denote $$S_0=\{ \dot a_{n, 0}, \dot a_{n, 1}, \dot b_{n, 0}, \dot b_{n, 1} \mid n\in \mathbb{N} \}$$
and
$$S_1=\{c_1, c_2, \ldots \}.$$
\begin{lemma}
\label{lemma-wp-in-G-0}
The word problems $WP_{S_0}(\dot{G}_0)$ and $WP_{S_1}(C)$ are decidable.	
\end{lemma}
\begin{proof}
Let us denote $S_{0,a}=\{\dot{a}_{n, 0}, \dot{a}_{n, 1} \mid n \in \mathbb{N} \}$ and 	$S_{0,b}=\{\dot{b}_{n, 0}, \dot{b}_{n, 1} \mid n \in \mathbb{N} \}$.  Since by definition $\dot{G}_0$ is  the direct sum of $\dot{A}$ and $\dot{B}$, the decidability of  $WP_{S_0}(\dot{G}_0)$ is equivalent to the decidability of both $WP_{S_{0, a}}(\dot{A})$ and  $WP_{S_{0, b}}(\dot{B})$. On its own turn, the decidability of  $WP_{S_{0, a}}(\dot{A})$ and  $WP_{S_{0, b}}(\dot{B})$ is an immediate consequence of Lemma \ref{lem-wp-An}.

The decidability of $WP_{S_1}(C)$ is obvious.
\end{proof}
~\\

Let $$\psi: C \rightarrow Aut(\dot G_0)$$ such that for $i \in \mathbb{N}$, $\psi(c_i)$ is the automorphism of $\dot G_0$ that fixes all elements from $\dot G_0$ except the ones from $\dot{A}_{n_i}\bigoplus \dot{B}_{n_i} \leq \dot G_0$,  and  for $\epsilon \in \{0, 1\}$, it flips $\dot{a}_{n_i, \epsilon}$ and $\dot{b}_{n_i, \epsilon}$, that is $\psi(c_i): \dot{a}_{n_i, \epsilon} \mapsto \dot{b}_{n_i, \epsilon}$ and $\psi(c_i): \dot{b}_{n_i,\epsilon} \mapsto \dot{a}_{n_i, \epsilon}$. The elements $\psi(c_i) \in Aut(\dot{G}_0)$, $i\in \mathbb{N}$, pairwise commute.  Thus $\psi$ is the induced homomorphism from $C$ to $Aut(\dot{G}_0)$. Now define $\dot{G}$ as the semidirect product of $\dot{G_0}$ and $C$ with respect to $\psi$, that is
\begin{align*}
	\dot G = \dot G_0 \rtimes_{\psi} C.
\end{align*}

Denote $$S=\{\dot a_{n, 0}, \dot a_{n, 1}, \dot b_{n, 0}, \dot b_{n, 1}, c_n \mid n\in \mathbb{N} \}.$$
\begin{lemma}
\label{lem-clarifying}
	The group $\dot G$ is torsion-free and metabelian. Also, for every $i, n \in \mathbb{N}$ and $s\in S_0$, we have
	
\begin{align*}
	    c_i^{-1} s c_i=c_i s c_i^{-1}=\left\{
                      \begin{array}{ll}
                   \dot{b}_{n_i, \epsilon}  & \mbox{if $\epsilon\in \{0, 1\}$, $n =n_i \in \mathcal{N}$  and $s = \dot{a}_{n_i, \epsilon}$ }\\
                   \dot{a}_{n_i, \epsilon}  & \mbox{if $\epsilon \in \{0, 1\}$, $n =n_i \in \mathcal{N}$  and $s = \dot{b}_{n_i, \epsilon}$ }\\
                   s  & \mbox{otherwise.}
                     \end{array}
                    \right.
\end{align*}
\end{lemma}
\begin{proof}
	The group $\dot G$ is a semidirect product of two torsion-free abelian groups, hence it is torsion-free and metabelian. The second statement follows from the definitions of the semidirect product, the automorphism $\psi(c_i): \dot{G}_0 \rightarrow \dot{G}_0 $, and from the identities $\dot a_{n_i, 1} = 2^i \dot a_{n_i, 0}$,  $\dot b_{n_i, 1} = 2^{i}  \dot b_{n_i, 0}$ and $c_i s c_i^{-1} = \psi(c_i)(s)$, $c_i^{-1} s c_i = \psi(c_i)^{-1}(s) =\psi(c_i)(s)$.
\end{proof}

\begin{lemma}
	The word problem $WP_S(\dot{G})$ is decidable.
	
\end{lemma}
\begin{proof}
Note that every word from $(S \cup S^{-1})^*$ can be algorithmically transformed into a word of the form $ e_1^{u_1}\ldots e_k^{u_k} u_{k+1}$ that represents the same element of $\dot{G}$, where $ e_1, \ldots, e_k \in S_0\cup S_0^{-1}$ and $u_1, \ldots, u_{k+1} \in (S_1\cup S_1^{-1})^*$.
On the other hand, by Lemma \ref{lem-clarifying}, for all $i\in \mathbb{N}$ and $s\in S_0 \cup S_0^{-1}$, $c_i^{\pm 1} s c_i^{\mp 1}$ is equal to some $\bar{s} \in S_0 \cup S_0^{-1}$. Moreover, since the sets $\mathcal{N}=\{n_1, n_2, \ldots \}$ and $\mathcal{M}=\{m_1, m_2, \ldots \}$ are recursively enumerated, $\bar{s}$ can be algorithmically verified. Therefore, every word of the form $ e_1^{u_1}\ldots e_k^{u_k} u_{k+1}$, on its own turn, can be algorithmically transformed into a word of the form $ e'_1\ldots e'_k u_{k+1}$ that represents the same element of $\dot{G}$ as the initial word, where $e'_1,\ldots, e'_k  \in S_0\cup S_0^{-1}$. Finally, the words of the form $ e'_1\ldots e'_k u_{k+1}$ represent the trivial element of $\dot{G}$ if and only if the word $e'_1\ldots e'_k$ represents the trivial element of $\dot{G}_0$ and $u_{k+1}$ represents the trivial element of $C$. By Lemma \ref{lemma-wp-in-G-0}, the last conditions can be checked algorithmically. Therefore, the word problem $WP_S(\dot{G})$ is decidable.

\end{proof}

Let us introduce the  enumeration $i: S \rightarrow \mathbb{N}$ of the set $S$:
\begin{align*}
    i(s)=\left\{
                      \begin{array}{ll}
                       5(n-1)+1  & \mbox{if $s=\dot{a}_{n, 0}$,}\\
                       5(n-1)+2  & \mbox{if $s=\dot{a}_{n, 1}$,}\\
                       5(n-1)+3  & \mbox{if $s=\dot{b}_{n, 0}$,}\\
                       5(n-1)+4  & \mbox{if $s=\dot{b}_{n, 1}$,}\\
                       5n & \mbox{if $s=c_n$.}\\
                     \end{array}
                    \right.
\end{align*}
Denote $X=\{x_1, x_2, \ldots \}$, where $x_n = i^{-1}(n)$, $n\in \mathbb{N}$. ($X$ and $S$ coincide as sets but have differing enumerations.) The essential property of $i$ is that it is a computable bijection, which implies that $WP_X(\dot{G})$ is decidable.

Now suppose that $\Phi = \Phi_X: \dot{G} \hookrightarrow H$ is an embedding of the group $\dot{G}$ into a two-generated torsion-free group $H$ such that it satisfies the properties from Theorem \ref{th-embedding}. In particular, the maps  $\phi_1$ and $\phi_2$ defined as

\begin{align*}
\phi_1:(n, \epsilon) \mapsto  \Phi(\dot{a}&_{n, \epsilon}),~ \phi_2:(n, \epsilon) \mapsto  \Phi(\dot{b}_{n, \epsilon}), \text{~and~}\phi_3: n \mapsto  \Phi(c_n),\\
~\\
  & \text{where~} n\in \mathbb{N}, \epsilon \in \{0, 1\},
\end{align*}
are computable, 
 and $H$ has decidable word problem. As the next lemma shows, the group $H$ has  the desirable properties we were looking for.
\begin{lemma}
	\label{final lemma}
	The group $H$ cannot be embedded in a finitely generated group with decidable conjugacy problem.
\end{lemma}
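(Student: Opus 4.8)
The plan is to argue by contradiction. Suppose $\ddot{G}$ embeds into a group $K$ with decidable conjugacy problem; I will manufacture a recursive set $T$ with $\mathcal{N}\subseteq T$ and $\mathcal{M}\cap T=\emptyset$, contradicting the recursive inseparability of $\mathcal{N}$ and $\mathcal{M}$. The point of the $2^i$ versus $3^i$ asymmetry between $\dot{A}_n$ and $\dot{B}_n$ is the following robust invariant. For $n\in\mathcal{N}\cup\mathcal{M}$ both $\dot{A}_n$ and $\dot{B}_n$ are infinite cyclic, and writing $\alpha=\Phi(\dot{a}_{n,0})$, $\beta=\Phi(\dot{b}_{n,0})$, we have $\Phi(\dot{a}_{n,1})=\alpha^{c_n}$ and $\Phi(\dot{b}_{n,1})=\beta^{2^i}$, where $c_n=2^i$ if $n=n_i\in\mathcal{N}$ and $c_n=3^i$ if $n=m_i\in\mathcal{M}$. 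Since $\ddot{G}$ is torsion-free and $\Phi$ is injective, $\beta$ has infinite order in $\ddot{G}$, hence in $K$. Consequently, if some $g\in K$ satisfies $g^{-1}\alpha g=\beta$, then automatically $g^{-1}\Phi(\dot{a}_{n,1})g=\beta^{c_n}$, and this equals $\Phi(\dot{b}_{n,1})=\beta^{2^i}$ if and only if $c_n=2^i$, i.e. if and only if $n\in\mathcal{N}$. Thus the pair $(\Phi(\dot{a}_{n,0}),\Phi(\dot{a}_{n,1}))$ is \emph{simultaneously} conjugate to $(\Phi(\dot{b}_{n,0}),\Phi(\dot{b}_{n,1}))$ in $K$ exactly when $n\in\mathcal{N}$, and this holds in every overgroup $K$, so the $\mathcal{M}$-side obstruction cannot be destroyed by enlarging the group. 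Note that conjugacy of the bare elements $\alpha$ and $\beta$ is not by itself usable, since an overgroup may well fuse them; it is the rigidity of a single simultaneous conjugator that survives.

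The decision procedure for membership in $T$ will extract this simultaneous-conjugacy condition using only the conjugacy algorithm of $K$ together with its word problem. First observe that a decidable conjugacy problem yields a decidable word problem, because $x=1$ in $K$ iff $x$ is conjugate to $1$; and since $K$ is (recursively) presented, its elements are effectively enumerable as words in the generators. Given $n$, I use the computable maps $\phi_1,\phi_2$ to write $\alpha,\ \Phi(\dot{a}_{n,1}),\ \beta,\ \Phi(\dot{b}_{n,1})$ as words in the generators of $\ddot{G}$, and then, via the fixed images in $K$ of the finitely many generators of $\ddot{G}$, as words in the generators of $K$. I then ask the conjugacy algorithm of $K$ whether $\alpha$ is conjugate to $\beta$. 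If the answer is negative I declare $n\notin T$. If it is affirmative I search through the enumeration of $K$, testing each candidate $g$ by the word problem, until I find one specific conjugator $g_0$ with $g_0^{-1}\alpha g_0=\beta$; this search halts because the conjugacy algorithm has certified that a conjugator exists. Finally I use the word problem of $K$ to test whether this particular $g_0$ also satisfies $g_0^{-1}\Phi(\dot{a}_{n,1})g_0=\Phi(\dot{b}_{n,1})$, declaring $n\in T$ precisely when it does.

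This procedure halts on every input, so $T$ is recursive. For correctness on $\mathcal{N}\cup\mathcal{M}$: if $n=n_i\in\mathcal{N}$, then $\alpha$ and $\beta$ are already conjugate in $\dot{G}$ via $t_i$, hence in $K$ via $\Phi(t_i)$, so the first test succeeds; and since $c_n=2^i$, any conjugator $g_0$ of $\alpha$ to $\beta$ sends $\Phi(\dot{a}_{n,1})=\alpha^{2^i}$ to $\beta^{2^i}=\Phi(\dot{b}_{n,1})$, so the final test succeeds and $n\in T$. If $n=m_i\in\mathcal{M}$ and the first test fails, then $n\notin T$ as required; and if it succeeds, the found $g_0$ sends $\Phi(\dot{a}_{n,1})=\alpha^{3^i}$ to $\beta^{3^i}$, which differs from $\Phi(\dot{b}_{n,1})=\beta^{2^i}$ because $\beta$ has infinite order, so the final test fails and $n\notin T$. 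Hence $\mathcal{N}\subseteq T$ and $\mathcal{M}\cap T=\emptyset$, contradicting recursive inseparability and proving the lemma.

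The main obstacle, and the place where torsion-freeness and the precise construction are indispensable, is making a robust invariant out of conjugacy at all: a single-element conjugacy query carries no information that survives passage to an arbitrary overgroup, since an overgroup may identify all nontrivial conjugacy classes. The resolution is to test a single simultaneous conjugator rather than a bare conjugacy relation, using the conjugacy algorithm of $K$ only to certify that a conjugator exists and then using the word problem of $K$ to audit the action of one explicit conjugator on the second generator. The exponent mismatch $3^i\neq 2^i$ together with the infinite order of $\beta$ is exactly what makes this audit fail for $\mathcal{M}$ and succeed for $\mathcal{N}$, uniformly in $K$.
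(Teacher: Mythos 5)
Your proof is correct and takes essentially the same approach as the paper: both construct a recursive separating set for $\mathcal{N}$ and $\mathcal{M}$ by deciding conjugacy of $\Phi(\dot{a}_{n,0})$ and $\Phi(\dot{b}_{n,0})$, effectively producing one conjugator, and then using the word problem to audit whether that same conjugator carries $\Phi(\dot{a}_{n,1})$ to $\Phi(\dot{b}_{n,1})$, with the $2^i$ versus $3^i$ mismatch and torsion-freeness ruling out the $\mathcal{M}$-side. The only cosmetic difference is that you verify the $\mathcal{M}$-side obstruction directly from the infinite order of $\Phi(\dot{b}_{n,0})$, whereas the paper packages it as Corollary \ref{cor-3}.
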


\begin{proof}
	By contradiction, let us assume that $H$ embeds in a (finitely generated) group $\tilde{H}$ that has decidable conjugacy problem. Then, for the purpose of convenience, without loss of generality let us assume that $H$ is  a subgroup of the group $\tilde{H}$.
	
	 Below we  show that the decidability of the conjugacy problem in $\tilde{H}$ contradicts  the assumption that $\mathcal{N}$ and  $\mathcal{M}$ are recursively inseparable.
	
	Let us define $\mathcal{C} \subseteq \mathbb{N}$ as
	\begin{align*}
		\mathcal{C}=\big\{n \in \mathbb{N} \mid \Phi(\dot{a}_{n, 0}) \text{~is conjugate to~} \Phi(\dot b_{n, 0}) \text{~in~} \tilde{H}\big\}.
	\end{align*}
Since the above described maps $\phi_1$, $\phi_2$ and $\phi_3$ are computable and  $\tilde{H}$ has decidable conjugacy problem, there exists an algorithm that for any $n\in \mathbb{N}$ verifies whether or not  $\Phi(\dot a_{n, 0})$ is conjugate to $\Phi(\dot b_{n, 0})$ in $\tilde{H}$. Therefore, the set $\mathcal{C}$ is recursive.

Note that for all $i\in \mathbb{N}$, since by Lemma \ref{lem-clarifying} the identity $c_i^{-1} \dot{a}_{n_i, 0} c_i = \dot{b}_{n_i, 0}$ holds, we get that $\Phi( \dot{a}_{n_i, 0})$ is conjugate to $\Phi(\dot{b}_{n_i, 0})$ in $\tilde{H}$. Therefore, $\mathcal{N}\subseteq \mathcal{C}$.

	Since for groups with decidable conjugacy problem one can algorithmically find conjugator element for each pair of conjugate elements of the group, recursiveness of $\mathcal{C}$ implies that there exists a computable map
	\begin{align*}
		{f}: \mathcal{C} \rightarrow \tilde{H}
	\end{align*}
	such that for all $n\in \mathcal{C}$ we have  
	$$f(n)^{-1} \Phi(\dot{a}_{n, 0}) f(n) = 	\Phi(\dot b_{n, 0}).$$
	(For example, one can define $f(n)$ as the element of $\tilde H$ that corresponds to the lexicographically smallest conjugator word composed by finite generator letters of $\tilde H$.)
	For $n \in \mathcal{C}$, let us denote
	$$f(n)=h_n \in \tilde{H}.$$
	
		Now let us define
	\begin{align*}
		\mathcal{A} = \big\{ n \in \mathcal{C} \mid  h_n^{-1} \Phi(\dot{a}_{n, 1}) h_n = 	\Phi(\dot b_{n, 1}) \big \} \subseteq \mathbb{N}.
	\end{align*}
Since  the word problem in $\tilde{H}$ is decidable, the set $\mathcal{C}$ is recursive and the maps $\Phi$ and $f$ are computable, we get that the set $\mathcal{A}$ is a recursive subset of $\mathbb{N}$. Also since 
\begin{align*}
	\dot a_{n_i, 1} = 2^i \dot a_{n_i, 0}, \dot b_{n_i, 1} = 2^{i}  \dot b_{n_i, 0} \text{~and~} c_i^{-1}\dot a_{n_i, 0} c_i =\dot b_{n_i, 0}, \text{~for~} i\in \mathbb{N},  
\end{align*}
in $\dot G$, we get that each conjugator of the pair $\Phi(\dot{a}_{n_i, 0}), \Phi(\dot{b}_{n_i, 0})$ is also a conjugator for the pair $\Phi(\dot{a}_{n_i, 1}), \Phi(\dot{b}_{n_i, 1})$. 
Therefore, since $ \mathcal{N} \subseteq \mathcal{C}$, we get  $$ \mathcal{N}  \subseteq \mathcal{A} .$$
 On the other hand, since for each $ m_i \in \mathcal{M}$ we have
 \begin{align*}
	\dot a_{m_i, 1} = 3^i \dot a_{m_i, 0}, ~\dot b_{m_i, 1} = 2^{i}  \dot b_{m_i, 0},  
\end{align*}
we get that the pairs of elements
\begin{align*}
	\big(\Phi(\dot a_{m_i, 0}), ~\Phi(\dot b_{m_i, 0}) \big)
	\text{~and~}
    \big(\Phi(\dot a_{m_i, 1}), ~\Phi(\dot b_{m_i, 1}) \big)
\end{align*}
cannot be conjugated in $\tilde{H}$ by the same conjugator. Therefore, we get that 
$$\mathcal{A} \cap \mathcal{M} = \emptyset.$$

Thus we got that $\mathcal{A} \subset \mathbb{N}$ is a recursive set such that $\mathcal{N} \subseteq \mathcal{A}$ and $\mathcal{A} \cap \mathcal{M} = \emptyset$. However, this contradicts the assumption that $\mathcal{N}$ and $\mathcal{M}$ are recursively inseparable. Lemma \ref{final lemma} is proved.

	\end{proof}

Finally, note that since $\dot G$ is metabelian, by property (4) of Theorem \ref{th-embedding}, $H$ is a solvable group of derived length $4$. Therefore, Lemma \ref{final lemma} asserts that there exists a solvable group of derived length $4$ that satisfies the statement of Theorem \ref{theorem-answer-to-collins}.
Also by a version of Higman's embedding theorem described by Aanderaa and Cohen in \cite{aanderaa-wp}, the group $H$ can be embedded into a finitely presented group $\mathcal{G}$ with decidable word problem. As Chiodo and Vyas showed in \cite{chiodo-vyas}, the group $\mathcal{G}$ defined this way will  also inherit the property of torsion-freeness from the group $H$. 

Since $H$ cannot be embedded into a group with decidable conjugacy problem, this property will be inherited by $\mathcal{G}$. Thus Theorem \ref{theorem-answer-to-collins} is proved.

~\\
~\\
 A. Darbinyan, \textsc{Department of Mathematics, Texas A\&M,
    Blocker Building, 3368 TAMU, 155 Ireland Street, College Station, TX, USA 77840}\par\nopagebreak
  \textit{E-mail address}: \texttt{adarbina@math.tamu.edu}

\end{document}